\documentclass[12pt]{amsart}
\usepackage{amsmath}
\usepackage{amssymb}
\usepackage[mathcal]{eucal}
\usepackage[all]{xy}
\usepackage{latexsym}
\usepackage{amstext}
\usepackage{amsfonts}
\usepackage{amsthm}
\usepackage{amsopn}
\usepackage{amsbsy}
\usepackage{layout}
\usepackage{color}
\usepackage{graphicx}
\usepackage{bm}
\usepackage{yfonts}
\usepackage{pifont}
\usepackage[scr]{rsfso}
\usepackage{tikz}
\newtheorem{thm}{Theorem}[section]
\newtheorem{lem}[thm]{Lemma}

\newtheorem{cor}[thm]{Corollary}
\newtheorem{pro}[thm]{Proposition}
\newtheorem{ex}[thm]{Example}

\newtheorem{claim}{Claim}

\newcommand{\Int}{\mbox{{\rm Int}}}

\def\supp{{\mathrm {supp}}\,}

\makeatletter
\def\Int{\mathop{\operator@font Int}\nolimits}
\makeatother

\begin{document}

\title[Bounded uniform homeomorphisms between $C_p^*$-spaces preserve pseudocompactness]
{Bounded uniform homeomorphisms between $C_p^*$-spaces preserve pseudocompactness}

\author{M. Krupski}
\address{Institute of Mathematics, University of Warsaw,
ul. Banacha 2, 02 097 Warszawa, Poland}
\email{mkrupski@mimuw.edu.pl}

\author{V. Valov}
\address{Department of Computer Science and Mathematics, Nipissing University,
100 College Drive, P.O. Box 5002, North Bay, ON, P1B 8L7, Canada}
\email{veskov@nipissingu.ca}
\thanks{The first author was partially supported by the NCN (National Science Centre, Poland) research Grant no. 2020/37/B/ST1/02613. The second author was partially supported by NSERC Grant 2025-07173}

\keywords{$C_p(X)$-spaces, function spaces, pointwise convergence topology, pseudocompactness, uniform homeomorphisms}
\subjclass[2010]{Primary 54C35; Secondary 54D30}

\begin{abstract}
For any Tychonoff space $X$ let $C_p(X)$ (resp., $C^*_p(X)$) be the set of all continuous (resp., and bounded) functions on $X$ with the pointwise convergence topology.  
Given Tychonoff spaces $X$ and $Y$, Uspenskij \cite{us} proved that if $C_p(X)$ is uniformly homeomorphic to $C_p(Y)$, then $X$ is pseudocompact if and only if $Y$ is pseudocompact. 
The second author and Vuma \cite{valvu} have shown that linear homeomorphisms between $C_p^*(X)$ and $C_p^*(Y)$ also preserve pseudocompactness. 
Recently Baars-van Mill-Tkachuk \cite{bmt} gave another proof of that result and raised the question if the same remains true provided $C_p^*(X)$ and $C_p^*(Y)$ are uniformly homeomorphic. 
In the present paper we introduce the notion of bounded uniformly continuous maps and show that every bounded uniform homeomorphism between $C_p^*(X)$ and $C_p^*(Y)$ preserve pseudocompactness. It is also shown that a continuous linear map between $C_p$-spaces is norm-bounded if and only if it is bounded in our sense. 
\end{abstract}

\maketitle\markboth{}{Uniform homeomorphisms}


\section{Introduction}\label{intro}
For a Tychonoff space $X$,
by $C_p(X)$ we denote the space of all continuous real-valued functions on $X$ equipped with the pointwise convergence topology, while
$C^*_p(X)$ is the subspace of $C_p(X)$ consisting of the bounded functions.
Given Tychonoff spaces $X$ and $Y$, Uspenskij \cite{us} proved that if $C_p(X)$ is uniformly homeomorphic to $C_p(Y)$, then $X$ is pseudocompact if and only if $Y$ is pseudocompact (see G\'orak-Krupski-Marciszewski \cite{gkm} for another proof of this result). Valov-Vuma \cite{valvu} have shown that
pseudocompactness is preserved by linear homeomorphisms between $C_p^*(X)$ and $C_p^*(Y)$.
Recently, Baars-van Mill-Tkachuk \cite{bmt} reestablished the same result and raised the question if this remains true for uniform homeomorphisms. 
It is well known that every linear continuous map between $C_p^*(X)$ and $C_p^*(Y)$ is norm-bounded.
In the present paper we introduce an analogue of the norm-boundedness for uniformly continuous maps between $C_p$-spaces and prove
the following (here $D_p(Y)$ is either $C_p(Y)$ or $C_p^*(Y)$):
\begin{thm}\label{main}
Suppose $\varphi: C_p^*(X)\to D_p(Y)$ is a bounded uniform homeomorphism. Then $Y$ is pseudocompact provided so is $X$.
\end{thm}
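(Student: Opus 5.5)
The plan is to argue by contradiction. Assume $X$ is pseudocompact, $Y$ is not, and $\varphi\colon C_p^*(X)\to D_p(Y)$ is a bounded uniform homeomorphism with inverse $\psi=\varphi^{-1}$. I am reading ``bounded'' as: $\varphi$ and $\psi$ send norm-bounded sets to norm-bounded sets. If the paper's definition differs, this is the property I would extract from it first. Since $Y$ is not pseudocompact, there is a discrete sequence $\{U_n\}$ of nonempty open sets in $Y$, with points $y_n\in U_n$. There are also functions $g_n\in C_p^*(Y)$ with $0\le g_n\le 1$, $g_n(y_n)=1$ and $g_n=0$ off $U_n$. Because the family is discrete, $g_A=\sum_{n\in A}g_n$ is continuous with norm $\le 1$ for every $A\subseteq\mathbb N$. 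The whole family $\{g_A\}$ therefore lies in the unit ball of $D_p(Y)$. By boundedness, $\{\psi(g_A)\}$ lies in a ball of radius $M$ in $C_p^*(X)$.

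Next I would extract finite supports from uniform continuity, as in Uspenskij's argument. For each $x\in X$, the map $g\mapsto\psi(g)(x)$ is uniformly continuous on $D_p(Y)$. So there are a finite set $S(x)\subseteq Y$ and $\delta_x>0$ such that $|g-g'|<\delta_x$ on $S(x)$ implies $|\psi(g)(x)-\psi(g')(x)|<1$. Symmetrically, for each $y$ there are a finite $K(y)\subseteq X$ and $\varepsilon_y>0$ controlling $f\mapsto\varphi(f)(y)$. Since $S(x)$ meets only finitely many $U_n$, the value $\psi(g_A)(x)$ depends, up to error $1$, only on $A\cap F_x$ for some finite $F_x$. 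By chaining along a segment split into small steps, one gets estimates of the form $|\psi(t g_A)(x)-\psi(tg_B)(x)|\le C$ whenever $A\cap F_x=B\cap F_x$ and $|t|\le T$; here the constant uses $M$ and the boundedness of $\varphi$ on balls.

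The key step is to show that the functions $\psi(t g_{\{n\}})$ cannot all be uniformly small near every point once the scale $t$ is large. Apply $\varphi$ at $y_n$, using $K(y_n)$ and $\varepsilon_{y_n}$. Since $\varphi(\psi(tg_{\{n\}}))(y_n)=t$ while $\varphi(\psi(0))(y_n)=0$, the functions $\psi(tg_{\{n\}})$ and $\psi(0)$ must differ by at least roughly $t/\text{(number of steps)}$ at some point $x_n\in K(y_n)$. Taking $t$ large but fixed, boundedness of $\psi$ keeps everything inside the $M$-ball, and we obtain points $x_n$ and open sets $W_n=\{x:|\psi(tg_{\{n\}})(x)-\psi(0)(x)|>c\}\ni x_n$ for a fixed $c>0$. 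Pseudocompactness of $X$ says that $\{W_n\}$ cannot be locally finite; in fact, some point $x^*$ has every neighbourhood meeting infinitely many $W_n$. I would then pick a subsequence $n_k$ with $W_{n_k}$ accumulating at $x^*$ and set $A=\{n_k\}$. The function $\psi(tg_A)$ is continuous at $x^*$, but the finite-support estimate at the points $x_{n_k}$ makes it oscillate by about $c$ arbitrarily close to $x^*$, which is the desired contradiction.

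The main obstacle is this last step. The finite-support control is only pointwise, at the points $x_{n_k}$, and it gives no uniform control in a neighbourhood of $x^*$. So one must compare $\psi(tg_A)$ with $\psi(tg_{\{n_k\}})$ at $x_{n_k}$, which requires $S(x_{n_k})$ to avoid the other $U_{n_j}$, while also using continuity at $x^*$ itself. My first attempt would be to choose the subsequence inductively, diagonalizing so that each new $n_k$ avoids $\bigcup_{j<k}F_{x_{n_j}}$ and also avoids $F_{x^*}$. If that is not enough, I would insert random signs $\pm g_{n_k}$ so that the resulting function is forced to oscillate regardless of the cross terms.
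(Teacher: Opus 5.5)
There is a genuine gap; in fact there are two.

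First, you are working with the wrong hypothesis. In the paper, ``bounded'' is a condition on $\varphi$ alone: there is a single $m$ such that every $y\in Y$ has a finite $K_m(y)\subseteq X$ with $a_\varphi(y,K_m(y))\le m$. Nothing is assumed about $\psi=\varphi^{-1}$, so your $M$-ball containing all $\psi(g_A)$ is unjustified. Compactness of $\{\psi(g_A)\}$ in $C_p^*(X)$ does not give norm-boundedness either.

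Your key step also needs a uniformity that your reading does not supply. Chaining with $K(y_n)$ and $\varepsilon_{y_n}$ only gives a point of $K(y_n)$ where $\psi(tg_{\{n\}})$ and $\psi(0)$ differ by something of order $\varepsilon_{y_n}t$. That depends on $n$, and nothing in your argument makes $c$ independent of $n$. With the paper's definition this step does go through: chaining as in Lemma 3.1, but over $K_m(y_n)$, gives $x_n\in K_m(y_n)$ with $|\psi(tg_{\{n\}})(x_n)-\psi(0)(x_n)|\ge t/m-1$ for every $n$.

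Second, the decisive step is not carried out, and the proposed diagonalization does not address the real difficulty. To make $\psi(tg_A)$ oscillate near $x^*$, you must transfer $|\psi(tg_{\{n_k\}})-\psi(0)|>c$ at some point $w$ close to $x^*$ over to $\psi(tg_A)$. That requires $A\cap F_w\subseteq\{n_k\}$.
\begin{itemize}
\item Pseudocompactness only makes the open sets $W_n$ accumulate at $x^*$. The points $x_n$ need not accumulate at $x^*$, or anywhere.
\item So $w$ must be taken in $W_{n_k}\cap V$, where $V$ is the neighbourhood of $x^*$ given by continuity of $\psi(tg_A)$. But $V$ depends on $A$, so the finite sets $F_w$ that $A$ must avoid are not known when $A$ is chosen.
\item Even at the points $x_{n_j}$, avoiding $\bigcup_{j<k}F_{x_{n_j}}$ keeps later indices out of $F_{x_{n_j}}$. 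It does nothing about earlier $n_i$ lying in $F_{x_{n_j}}$.
\end{itemize}
The ``random signs'' fallback is not an argument.

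The missing idea is that the contradiction is global, not a pointwise oscillation. If $t>m$ and $A\neq B$, boundedness of $\varphi$ forces $\sup_X|\psi(tg_A)-\psi(tg_B)|\ge 1$. Otherwise $|tg_A-tg_B|\le m$ on $Y$, which fails at $y_n$ for $n\in A\triangle B$. So $\{\psi(tg_A):A\subseteq\mathbb N\}$ is an uncountable $1$-separated family for the sup-metric.

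The paper contradicts this with norm-separability of $C^*(X)$ when $X$ is a metric compactum (Proposition 3.2). It then reduces the general pseudocompact case to that one by a closing-off argument. This produces $f\in\mathcal F_X$ and $g\in\mathcal F_Y$ with $\varphi(C_p^q(f))=C_p^q(g)$. Here $X_f^q=X_f$ is a metric compactum because $X$ is pseudocompact, and the induced map $\varphi_1$ is again bounded.

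In your setting the reduction could be shortened. Since $A\mapsto g_A$ is continuous, $\{\psi(tg_A)\}$ is compact metrizable in $C_p^*(X)$. All its members then factor through the diagonal map of a countable dense subfamily, onto a metric compactum. No boundedness of $\varphi^{-1}$ is needed anywhere.
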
 
Recall that if $\kappa$ is an infinite cardinal, a space $X$ is called {\em $\kappa$-pseudocompact} \cite{ken} if for every continuous map $f:X\to\mathbb R^\kappa$ the image $f(X)$ is compact.
\begin{cor}
Let $\varphi: C_p^*(X)\to C_p^*(Y)$ be a uniform homeomorphism such that both $\varphi$ and $\varphi^{-1}$ are bounded. Then: 
\begin{itemize}
\item[(i)] $X$ is pseudocompact if and only if $Y$ is pseudocompact;
\item[(ii)] For every infinite cardinal $\kappa$, the space $X$ is $\kappa$-pseudocompact if and only if $Y$ is $\kappa$-pseudocompact.
\end{itemize}
\end{cor}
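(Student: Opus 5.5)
Part (i) follows from Theorem~\ref{main} applied in both directions. Take $D_p(Y)=C_p^*(Y)$. If $X$ is pseudocompact, then applying Theorem~\ref{main} to the bounded uniform homeomorphism $\varphi$ shows that $Y$ is pseudocompact. Conversely, $\varphi^{-1}\colon C_p^*(Y)\to C_p^*(X)$ is also a uniform homeomorphism, and it is bounded by hypothesis. So Theorem~\ref{main} with the roles of $X$ and $Y$ exchanged gives that $X$ is pseudocompact whenever $Y$ is.

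For (ii) I would reduce $\kappa$-pseudocompactness to pseudocompactness of a power. The plan is to use the known characterization that $X$ is $\kappa$-pseudocompact if and only if $X^\kappa$ is pseudocompact. For $\kappa=\omega$ this is the classical fact that $X^\omega$ is pseudocompact iff every countable power $X^n$ is. In general it follows from Glicksberg-type results: a product is pseudocompact iff every countable subproduct is, together with Kennison's characterization of $\kappa$-pseudocompactness via compactness of images in $\mathbb R^\kappa$. If this equivalence is not available in exactly this form, I would instead use that $X$ is $\kappa$-pseudocompact iff every locally finite family of open sets of cardinality $\kappa$... but the power formulation is the convenient one.

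The key step, which I expect to be the main obstacle, is to transfer $\varphi$ to a bounded uniform homeomorphism between $C_p^*(X^\kappa)$ and $C_p^*(Y^\kappa)$. This is not straightforward, since $C_p^*(X^\kappa)$ is not a product of copies of $C_p^*(X)$. What I would try first is to work with the product $\prod_\kappa C_p^*(X)\cong C_p^*(\bigoplus_\kappa X)^{\rm b}$. More precisely, $\varphi^\kappa$ is a uniform homeomorphism of $C_p(X)^\kappa = C_p(\bigoplus_\kappa X)$ onto $C_p(\bigoplus_\kappa Y)$, and it preserves the subspaces of functions bounded on each summand. Boundedness of $\varphi$ should ensure that $\varphi^\kappa$ maps uniformly bounded families to uniformly bounded families, so it restricts to a bounded uniform homeomorphism on these subspaces. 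Then I would use the relation between $\kappa$-pseudocompactness of $X$ and pseudocompactness-type properties of $\bigoplus_\kappa X$ relative to such bounded function spaces. Running the argument of Theorem~\ref{main} in this setting, rather than citing it verbatim, gives the equivalence. This is where the real work lies: one must check that the proof of Theorem~\ref{main} only uses the function space in a way compatible with this coordinatewise structure.
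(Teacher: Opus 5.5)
Your argument for (i) is correct and is exactly how (i) follows from Theorem~\ref{main}: apply the theorem to $\varphi$ and to $\varphi^{-1}$, both with $D_p=C_p^*$.

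For (ii) there is a genuine gap, and it starts with the characterization you build on. It is false that $X$ is $\kappa$-pseudocompact if and only if $X^\kappa$ is pseudocompact.

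\emph{Failure at $\kappa=\omega$.} With the paper's definition, $\omega$-pseudocompactness coincides with pseudocompactness, because a pseudocompact subspace of the metrizable space $\mathbb R^\omega$ is compact. On the other hand, there are pseudocompact (even countably compact) spaces $X$ with $X\times X$, hence $X^\omega$, not pseudocompact (Nov\'ak, Terasaka).

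\emph{Failure at $\kappa=\omega_1$, in the other direction.} The space $[0,\omega_1)$ is $\omega$-bounded: every countable subset has compact closure, and this property is preserved by arbitrary products. So $[0,\omega_1)^{\omega_1}$ is countably compact, hence pseudocompact. Yet $[0,\omega_1)$ has weight $\omega_1$, so it embeds in $\mathbb R^{\omega_1}$ as a non-compact set, and therefore it is not $\omega_1$-pseudocompact.

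Glicksberg's theorem compares $X^\kappa$ with its countable subproducts; it does not relate $X$ to $X^\kappa$. Your fallback via locally finite families is left unfinished.

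The transfer step does not rescue the plan either. The map $\varphi^\kappa$ acts on $\prod_\kappa C_p^*(X)$, that is, on functions on $\bigoplus_\kappa X$, which is $X\times\kappa$ with $\kappa$ discrete, not $X^\kappa$. Moreover, $\bigoplus_\kappa X$ is never pseudocompact for infinite $\kappa$, so Theorem~\ref{main} applied there yields nothing. You also never specify which property of $\bigoplus_\kappa X$ is supposed to encode $\kappa$-pseudocompactness of $X$. Saying you would rerun the proof of Theorem~\ref{main} in that setting is therefore not an argument, and (ii) remains unproved.

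The paper states (ii) without a separate argument. A proof has to handle $\kappa$ directly. One possibility is to adapt the factorization argument in the proof of Theorem~\ref{main} to maps of $X$ and $Y$ onto spaces of weight $\le\kappa$, which is exactly what the definition of $\kappa$-pseudocompactness quantifies over. Any route through pseudocompactness of $X^\kappa$ cannot work.
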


Theorem \ref{main} together with the Uspenskij's \cite{us} characterization of pseudocompactness implies also that if  $\varphi: C_p(X)\to C_p^*(Y)$ is a bounded uniform homeomorphism, then $X$ is pseudocompact if and only if $Y$ is pseudocompact.

\section{Preliminary results}
One of the main tools used to prove
Theorem \ref{main} is the Gul'ko's \cite{gu} notion of supports, see also \cite{arbit}, \cite{ev}, \cite{k}, \cite{mp}.
Suppose $\varphi:D_p(X)\to D_p(Y)$ is a  uniformly continuous surjection. For every $y\in Y$
and a set $K\subset X$ we define
$$a_\varphi(y,K)=\sup\{|\varphi(f)(y)-\varphi(g)(y)|:f,g\in D_p(X){~}\hbox{and}{~}| f(x)- g(x)|<1{~} \forall x\in K\}.$$

For every $y\in Y$ and $n\in\mathbb N$ consider the following families :
$$\mathcal A(y)=\{K\subset X:K{~}\hbox{finite and}{~}a_\varphi(y,K)<\infty\},$$
$$\mathcal A_n(y)=\{K\subset X:K{~}\hbox{finite and}{~}a_\varphi(y,K)\leq n\}.$$
Let also $Y_n=\{y\in Y:\mathcal A_n(y)\neq\varnothing\}$,
$K(y)=\bigcap\mathcal A(y)$ and $a_\varphi(y)=a_\varphi(y,K(y))$.
The next proposition provides the key property of the Gul'ko's supports $K(y)$ (see, e.g. \cite{mp}).
\begin{pro}\label{properties of Gulko support}
For every $y\in Y$, we have $K(y)\in\mathcal{A}(y)$. Moreover $K(y)$ is the minimal, with respect to inclusion, set in $\mathcal{A}(y)$.
\end{pro}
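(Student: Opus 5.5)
The plan is to prove two facts about $\mathcal A(y)$: it is nonempty, and it is closed under intersections of two members. Once we have both, $K(y)\in\mathcal A(y)$ follows at once. Every member of $\mathcal A(y)$ is finite, so fixing one $K_0\in\mathcal A(y)$ we get
$$K(y)=\bigcap\mathcal A(y)=\bigcap\{K_0\cap K: K\in\mathcal A(y)\}.$$
The right-hand side is an intersection of subsets of the finite set $K_0$, so it is realized by finitely many members $K_1,\dots,K_m$, i.e. $K(y)=K_0\cap K_1\cap\dots\cap K_m$. By closure under finite intersections this set lies in $\mathcal A(y)$. Minimality is then automatic, since $K(y)$ is contained in every member of $\mathcal A(y)$ by definition.

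\emph{Nonemptiness.} We use the uniform continuity of $\varphi$ together with the fact that the evaluation $h\mapsto h(y)$ is uniformly continuous on $D_p(Y)$. This gives a finite $K\subset X$ and an $\varepsilon>0$ such that $|\varphi(f)(y)-\varphi(g)(y)|<1$ whenever $|f-g|<\varepsilon$ on $K$. Now take $f,g\in D_p(X)$ with $|f-g|<1$ on $K$, and fix $m\in\mathbb N$ with $1/m<\varepsilon$. Put $h_i=f+\frac im(g-f)$ for $i=0,\dots,m$. These functions lie in $D_p(X)$, since $D_p(X)$ is a linear space containing $f$ and $g$. Consecutive $h_i$ differ by less than $1/m<\varepsilon$ on $K$. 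Chaining the estimates along $h_0,\dots,h_m$ gives $|\varphi(f)(y)-\varphi(g)(y)|<m$. Hence $a_\varphi(y,K)\le m<\infty$ and $K\in\mathcal A(y)$.

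\emph{Closure under intersection.} This is the main step. Let $K_1,K_2\in\mathcal A(y)$ with $a_\varphi(y,K_i)\le n_i$, and put $K=K_1\cap K_2$. Take $f,g\in D_p(X)$ with $|f-g|<1$ on $K$. Since $K_1\cup K_2$ is finite and $X$ is Tychonoff, there is a bounded continuous $h$ with $h=f$ on $K_1$ and $h=g$ on $K_2\setminus K_1$. To build it, separate the finitely many points by continuous functions and take a linear combination of bounded functions equal to $1$ at one point and $0$ at the others. Then $|h-f|=0<1$ on $K_1$, so $|\varphi(h)(y)-\varphi(f)(y)|\le n_1$. Also $|h-g|<1$ on $K_2$: on $K_2\setminus K_1$ the difference is $0$, and on $K$ we have $h=f$, so $|h-g|=|f-g|<1$. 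Therefore $|\varphi(h)(y)-\varphi(g)(y)|\le n_2$. The triangle inequality gives $a_\varphi(y,K)\le n_1+n_2<\infty$, so $K\in\mathcal A(y)$. The only delicate point is choosing the interpolating function $h$ so that the two hypotheses apply simultaneously; the finiteness of the supports makes this possible in any Tychonoff space. The argument also covers $K=\varnothing$.
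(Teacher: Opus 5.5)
Your proof is correct and complete. The paper gives no proof of this proposition; it only refers to \cite{mp}. Your route is the standard one for Gul'ko supports: nonemptiness of $\mathcal A(y)$ via uniform continuity and chaining along $f+\frac{i}{m}(g-f)$; closure under pairwise intersections via an interpolating $h$, which gives $a_\varphi(y,K_1\cap K_2)\le a_\varphi(y,K_1)+a_\varphi(y,K_2)$; and finiteness of $K_0$ to reduce $\bigcap\mathcal A(y)$ to a finite intersection. The paper uses the same devices elsewhere: the uniform-continuity step and the interpolating function in Proposition~\ref{Y_m+1 is nonempty}, and the chaining in Lemma 3.1.
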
 

We say that a uniformly continuous surjection $\varphi:D_p(X)\to D_p(Y)$ is {\em bounded} if there is $m$ such that $Y\subset Y_m$.
This means that for every $y\in Y$ there is a set $K_m(y)\in\mathcal A_m(y)$. Since $\mathcal A_m(y)\subset\mathcal A(y)$ and $K(y)=\bigcap\mathcal A(y)$, we have $K(y)\subset K_m(y)$ for all $y\in Y$. 
Let us also note the following:
\begin{pro}\label{Y_m+1 is nonempty}
Suppose that $\varphi:D_p(X)\to D_p(Y)$ is a uniformly continuous surjection. Let $y\in Y$. If  there is a subset $A\subset X$ satisfying $a_\varphi(y,A)\leq m$, then there is a finite subset $K\subseteq X$ with $a_\varphi(y,K)\leq m+1$, so $y\in Y_{m+1}$.
\end{pro}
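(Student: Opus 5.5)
The plan is to use the uniform continuity of $\varphi$ to find a finite set $F$ that controls the value $\varphi(\cdot)(y)$ up to an error less than $1$. Then we show $a_\varphi(y,F)\le m+1$ by passing through an auxiliary function $h$ that is $A$-close to $f$ and agrees with $g$ on $F$. The set $K$ will be $F$ itself.

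\emph{Choice of $F$.} The uniformity of $D_p(Y)$ is the pointwise one, so the evaluation map $e_y:D_p(Y)\to\mathbb R$, $u\mapsto u(y)$, is uniformly continuous. Hence $e_y\circ\varphi$ is uniformly continuous on $D_p(X)$ with its pointwise uniformity. A base of that uniformity consists of the entourages $\{(f,g):|f(x)-g(x)|<\delta \ \forall x\in F\}$ with $F\subset X$ finite and $\delta>0$. So there are a finite $F\subset X$ and $\delta>0$ such that
\[
|f(x)-g(x)|<\delta \ \text{for all } x\in F \ \Longrightarrow\ |\varphi(f)(y)-\varphi(g)(y)|<1 .
\]
Put $K=F$.

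\emph{The interpolating function.} Take $f,g\in D_p(X)$ with $|f(x)-g(x)|<1$ for all $x\in F$. Since $F$ is finite and $f-g$ is continuous, there is an open set $U\supset F$ on which $|f-g|<1$. Because $X$ is Tychonoff and $F$ is finite, there is a continuous $u:X\to[0,1]$ with $u|_F\equiv1$ and $u\equiv 0$ outside $U$. Set
\[
h=f+u\,(g-f).
\]
Then $h$ is continuous, and it is bounded whenever $f$ and $g$ are, so $h\in D_p(X)$ in either case. Moreover $h=g$ on $F$. Finally $|h-f|=u|g-f|<1$ on all of $X$: on $U$ because $|g-f|<1$ there and $u\le 1$, and off $U$ because $u=0$. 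In particular this holds on $A$.

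\emph{Estimate.} Since $a_\varphi(y,A)\le m$ and $|h-f|<1$ on $A$, we get $|\varphi(f)(y)-\varphi(h)(y)|\le m$. Since $h=g$ on $F$, the choice of $F$ gives $|\varphi(h)(y)-\varphi(g)(y)|<1$. Adding these, $|\varphi(f)(y)-\varphi(g)(y)|<m+1$. Taking the supremum over all such $f,g$ yields $a_\varphi(y,F)\le m+1$. Thus $F\in\mathcal A_{m+1}(y)$, i.e.\ $y\in Y_{m+1}$.

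The only delicate point is constructing $h$ so that it stays within distance strictly less than $1$ of $f$ on all of $A$, which may be infinite, while coinciding with $g$ on $F$. Strictness of $|f-g|<1$ on the finite set $F$ is exactly what makes the neighbourhood $U$, and hence the cut-off $u$, available.
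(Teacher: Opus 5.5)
Your proof is correct and follows the paper's argument. Both choose a finite set by uniform continuity at $y$, then pass through an auxiliary function that is within distance $<1$ of one of the two functions on all of $X$ (hence on $A$) and agrees with the other on the finite set, and finally add the estimates $\le m$ and $<1$. The only difference is how the auxiliary function is built: the paper extends $(f_0-g_0)\upharpoonright K$, with values in $[-c,c]$ for some $c<1$, over $\beta X$ and adds it to $g_0$, while you use a cutoff $u$ supported in the open set $\{x:|f(x)-g(x)|<1\}$, which works equally well.
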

\begin{proof}
Since $\varphi$ is uniformly continuous, there is $\varepsilon>0$ and a finite subset $K\subseteq X$ such that:
\begin{equation}\label{eq1}
\begin{split}
&\mbox{If } f,g\in D_p(X) \mbox{ satisfy } |f(x)-g(x)|<\varepsilon\; \mbox{ for } x\in K,\\ &\mbox{then } |\varphi(f)(y)-\varphi(g)(y)|<1.
\end{split}
\end{equation}
We will show that $K$ is as desired. To this end, fix $f_0,g_0\in D_p(X)$ such that $|f_0(x)-g_0(x)|<1$ for all $x\in K$. Let $c=\max\{|f_0(x)-g_0(x)|:x\in K\}$. Clearly, $c<1$. Let $s:K\to [-c,c]$ be the restriction of the function $f_0-g_0$ to $K$, and let $\tilde{s}:\beta X\to [-c,c]$ be the extension of $s$ over the \v{C}ech-Stone compactification $\beta X$ of $X$. Finally, let $h=\tilde{s}\upharpoonright X$ be the restriction of $\tilde{s}$ to $X$. For any $x\in X$ (in particular for any $x\in A$) we have:
$$|(h+g_0)(x)-g_0(x)|=|h(x)|\leq c <1.$$
So we infer from $a(y,A)\leq m$ that
\begin{equation}\label{eq2}
 |\varphi(h+g_0)(y)-\varphi(g_0)(y)|\leq m
\end{equation}
If $x\in K$, then
$$|(h+g_0)(x)-f_0(x)|=|s(x)+g_0(x)-f_0(x)|=0.$$
Hence, according to \eqref{eq1},
\begin{equation}\label{eq3}
|\varphi(h+g_0)(y)-\varphi(f_0)(y)|<1.
\end{equation}
From \eqref{eq2} and \eqref{eq3}, we get
\begin{equation*}
 \begin{split}
&|\varphi(f_0)(y)-\varphi(g_0)(y)|=|\varphi(h+g_0)(y)-\varphi(g_0)(y)-(\varphi(h+g_0)(y)-\varphi(f_0)(y))|\leq \\
&|\varphi(h+g_0)(y)-\varphi(g_0)(y)|+|(\varphi(h+g_0)(y)-\varphi(f_0)(y)|<m+1.
 \end{split}
\end{equation*}
Since $f_0$ and $g_0$ were chosen arbitrarily, the result follows.
\end{proof}

If $\varphi:D_p(X)\to D_p(Y)$ is a continuous and linear map, then for every $y\in Y$ we define $\supp(y)$ to be the set of all $x\in X$ such that for every neighborhood $U_x$ of $x$ in $X$ there is a function $f\in D_p(X)$ with $f(X\backslash U_x)=\{0\}$ and $\varphi(f)(y)\neq 0$. In case $\varphi(D_p(X))$ is dense in $D_p(Y)$, every $\supp(y)$ is a finite and non-empty subset of $X$. Moreover, there are non-zero numbers $\lambda_x$ for all $x\in\supp(y)$, such that $\varphi(f)(y)=\sum_{x\in\supp(y)}\lambda_x\cdot f(x)$ for all $f\in D_p(X)$, see \cite{ar1}, \cite{bd}.
\begin{pro}
Let $\varphi:D_p(X)\to D_p(Y)$ be a continuous and linear map such that $\varphi(D_p(X))$ is dense in $D_p(Y)$. Then $K(y)=\supp(y)$ and 
$\varphi$ is bounded in our sense if and only if it is norm-bounded.
\end{pro}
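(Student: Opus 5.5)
Write $\varphi(f)(y)=\sum_{x\in\supp(y)}\lambda_x f(x)$ with all $\lambda_x\neq 0$, which we may do by the representation recalled above. Then put $\|\varphi_y\|=\sum_{x\in\supp(y)}|\lambda_x|$. Norm-boundedness of $\varphi$ means $\sup_{y\in Y}\|\varphi_y\|<\infty$. The first step is to compute $a_\varphi(y,K)$ for a finite $K$. By linearity,
$$a_\varphi(y,K)=\sup\Bigl\{\Bigl|\sum_{x\in\supp(y)}\lambda_x h(x)\Bigr|: h\in D_p(X),\ |h|<1 \mbox{ on } K\Bigr\},$$
where $h=f-g$.

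First suppose $\supp(y)\subset K$. Then the supremum is at most $\|\varphi_y\|$. It equals $\|\varphi_y\|$ because, for any $c<1$, we can choose $h\in C^*(X)$ with $h(x)=c\cdot\mathrm{sign}(\lambda_x)$ on the finite set $\supp(y)$. Such an $h$ is obtained by extending over $\beta X$, as in the proof of Proposition \ref{Y_m+1 is nonempty}, or simply by Tychonoff separation of finitely many points.

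Now suppose some $x_0\in\supp(y)\setminus K$. Since $K\cup\supp(y)$ is finite and $X$ is Tychonoff, there is a bounded continuous $h$ with $h(x_0)=t$ and $h=0$ on $(K\cup\supp(y))\setminus\{x_0\}$. This gives the value $|\lambda_{x_0}t|\to\infty$ as $t\to\infty$, so $a_\varphi(y,K)=\infty$.

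Hence $\mathcal A(y)=\{K \mbox{ finite}: \supp(y)\subset K\}$. Since $\supp(y)$ is finite and nonempty, it belongs to $\mathcal A(y)$, so $K(y)=\bigcap\mathcal A(y)=\supp(y)$; this is consistent with Proposition \ref{properties of Gulko support}. Moreover $a_\varphi(y,K)=\|\varphi_y\|$ for every $K\in\mathcal A(y)$, and in particular $a_\varphi(y)=\|\varphi_y\|$.

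The equivalence then follows directly. If $\varphi$ is bounded, i.e. $Y\subset Y_m$, then for each $y$ some $K\in\mathcal A_m(y)$ exists, giving $\|\varphi_y\|=a_\varphi(y,K)\le m$, so $\varphi$ is norm-bounded. Conversely, if $\|\varphi_y\|\le M$ for all $y$, then $\supp(y)\in\mathcal A_m(y)$ with $m\ge M$ an integer, so $Y\subset Y_m$.

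One caveat is that "bounded" was defined for uniformly continuous surjections. A continuous linear map is uniformly continuous, and the argument above uses only density, not surjectivity, so I would note that the definition is applied verbatim.

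The main obstacle is the care needed in the case $D_p=C_p^*$, where the test functions must be bounded. This is handled by the finite-set interpolation with bounded continuous functions. The other point needing care is the strict inequality $|h|<1$, which is why the supremum is approached through $c<1$ rather than attained.
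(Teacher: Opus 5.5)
Your argument is correct, but it is organized differently from the paper's.

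You compute $a_\varphi(y,K)$ exactly for every finite $K$: it equals $\sum_{x\in\supp(y)}|\lambda_x|$ if $\supp(y)\subset K$ and $\infty$ otherwise. This gives the complete description $\mathcal A(y)=\{K\ \text{finite}:\supp(y)\subset K\}$. From it, $K(y)=\supp(y)$ is immediate, and the equivalence comes with the sharp constant: $Y\subset Y_m$ iff $\sup_y\sum_{x\in\supp(y)}|\lambda_x|\le m$.

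The paper instead obtains $K(y)\subset\supp(y)$ from the minimality of Gul'ko supports (Proposition \ref{properties of Gulko support}). It gets the reverse inclusion by contradiction, using a function vanishing on $(K(y_0)\cup\supp(y_0))\setminus\{x_0\}$ with a large value at $x_0$; this is essentially your ``$a_\varphi(y,K)=\infty$'' step. For the equivalence, the paper never uses the representation $\sum\lambda_x f(x)$. It uses $\|\varphi\|=\sup_y a_\varphi(y,X)$ together with the monotonicity $a_\varphi(y,X)\le a_\varphi(y,K_m(y))$ for one direction. For the other direction it uses Proposition \ref{Y_m+1 is nonempty} with $A=X$, at the cost of a $+1$ ($m\ge\|\varphi\|+1$).

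So the paper's route is shorter given the earlier propositions and never needs the exact value of $a_\varphi(y,K)$. Yours is self-contained and yields exact values, at the price of the lower-bound interpolation. It also does not lean on Proposition \ref{properties of Gulko support}, which is stated for uniformly continuous surjections rather than maps with dense image (the issue you rightly flag).

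In that lower bound, say explicitly that $h$ must satisfy $|h|<1$ on all of $K$, not just on $\supp(y)$. Your $[-c,c]$-valued extension over $\beta X$ guarantees this. If you use finite-set interpolation instead, prescribe, e.g., $h=0$ on $K\setminus\supp(y)$.
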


\begin{proof}
The representation of $\varphi(f)(y)$ as $\sum_{x\in\supp(y)}\lambda_x\cdot f(x)$ for all $f\in D_p(X)$
shows that $a_\varphi(y,\supp(y))=\sum_{x\in\supp(y)}|\lambda_x|<\infty$ for all $y\in Y$.
Hence, $\supp(y)\in\mathcal A(y)$ for all $y\in Y$, which implies $K(y)\subset\supp(y)$ because $K(y)$ is the minimal set $K\subset X$ with $a_\varphi(y, K)<\infty$ (see Proposition \ref{properties of Gulko support}).
 To prove the reverse inclusion, suppose there are $y_0\in Y$ and $x_0\in\supp(y_0)\backslash K(y_0)$. Let $f\in D_p(X)$ be such that $f(x)=0$ for all $x\in (K(y_0)\cup\supp(y_0))\backslash\{x_0\}$ and $f(x_0)>a_\varphi(y_0,K(y_0))/|\lambda_{x_0}|$. Then $|\varphi(f)(y_0)|=|\lambda_{x_0}f(x_0)|>a_\varphi(y_0,K(y_0))$.
 By linearity, $\varphi$ maps the zero function onto the zero function, and since $f(x)=0$ for all $x\in K(y_0)$, we have $|\varphi(f)(y_0)|\leq a_\varphi(y_0,K(y_0))$, a contradiction. Therefore, $K(y)=\supp(y)$.

If $\varphi$ is norm-bounded, then $||\varphi||=\sup\{|\varphi(f)(y)|:||f||<1,y\in Y\}<\infty$. So, by Proposition \ref{Y_m+1 is nonempty}, if $m\geq ||\varphi||+1$, then
$Y\subset Y_m$ and $\varphi$ is bounded in our sense.
For the reverse implication, suppose $Y\subset Y_m$ for some $m$. Since
$||\varphi||=\sup_{y\in Y}a{_\varphi}(y,X)$ and $a{_\varphi}(y,X)\leq a{_\varphi}(y,K_m(y))$, we have $||\varphi||\leq m$.
\end{proof}
\begin{ex}
There exists a non-linear bounded uniformly continuous surjection $\varphi:D_p(X)\to D_p(Y)$.
\end{ex}

Indeed, Let $T:D_p(X)\to D_p(Y)$ be a non-linear uniformly continuous surjection such that $X$ and $Y$ are metrizable spaces. Then, by \cite{gu} (see also \cite{mp}), each set $Y_{n,k}=\{y\in Y:\exists K\subset X, a_T(y,K)\leq n,|K|\leq k\}$ is closed in $Y$ and $Y=\bigcup_{n,k}Y_{n,k}$. So, $Y_{n,k}\neq\varnothing$ for some integers $n,k$. Let $\theta: D_p(Y)\to D_p(Y_{n,k})$ be the restriction map $g\to g|Y_{n,k}$ and $\varphi=\theta\circ T$.
Then $\varphi:D_p(X)\to D_p(Y_{n,k})$ is a non-linear uniformly continuous surjection. Moreover, for every $y\in Y_{n,k}$ there exists $K\in\mathcal A_n(y)$ such that $|K|\leq k$ and $a_T(y,K)\leq n$. On the other hand, $a_T(y,K)=a_\varphi(y,K)$. Therefore, $\varphi$ is bounded.

\medskip

It is well known that if $f:X\to Y$ is a surjective map, then the quotient topology on $Y$ generated by $f$ is not always completely regular. The strongest among all completely regular topologies on $Y$ with respect to which $f$ is still continuous exists and it called {\em $R$-quotient topology}, see \cite{ar}. The set $Y$ with the $R$-quotient topology is denoted by $Y^q$ and $q(f):X\to Y^q$ is the corresponding to $f$ continuous map. For a space $X$ by 
$\mathcal F_X$ we denote the family of all continuous maps from $X$ onto a second countable space and direct this family by the relation:
\begin{align*}
 g\succ f \mbox{  if there is a continuous map } h(g,f): g(X)\to f(X) \mbox{ with }
 h(g,f)\circ g=f.
\end{align*}

If $f\in\mathcal F_X$ let $X_f=f(X)$, $D_p(f)=f^\sharp(D_p(X_f))$ and $D_p^q(f)=q^\sharp(f)(D_p(X_f^q))$, where $f^\sharp:D_p(X_f)\to D_p(X)$ and
$q^\sharp(f):D_p(X_f^q)\to D_p(X)$ are the dual continuous maps of $f$ and $q(f)$.
We need the following observations:

\smallskip

\begin{itemize}\itemsep10pt
\item[Fact 1.] $D_p(X)=\bigcup\{D_p(f):f\in\mathcal F_X\}$ and $D_p(f)\subset D_p^q(f)$;
\item[Fact 2.] For every countable $A\subset D_p(X)$ there exists $f\in\mathcal F_X$ with $A\subset D_p(f)$;
\item[Fact 3.] $D_p^q(f)\subset D_p(X)$ is closed and separable;
\end{itemize}

\smallskip

The first two facts are obvious. To prove Fact 3, observe that $D_p^q(f)$ is closed in $D_p(X)$ because $q(f)$ is $R$-quotient \cite{ar}. On the other hand, there is a one-to-one map from $X_f^q$ onto $X_f$, so $D_p(f)$ is dense in $D_p^q(f)$. Moreover, $D_p(f)$ is homeomorphic to $D_p(X_f)$ and, since $X_f$ has countable weight, $D_p(X_f)$ is separable.
 
\section{Proof of Theorem $1.1$}
The following lemma was established in \cite[Lemma 3.1]{gkm} for uniform surjections between $C_p(X)$ and $C_p(Y)$, but the same proof works in our situation.
\begin{lem}
Let $\varphi: C_p^*(X)\to D_p(Y)$ be a uniform surjection. If $y\in Y$ such that $|f_1(x)-f_2(x)|<n$ for all $x\in K(y)$, then 
$|\varphi(f_1)(y)-\varphi(f_2)(y)|\leq n\cdot a_\varphi(y,K(y))$.
\end{lem}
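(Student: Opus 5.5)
Fix $y\in Y$ and write $K=K(y)$, $a=a_\varphi(y,K)$. By Proposition \ref{properties of Gulko support}, $K\in\mathcal A(y)$, so $a<\infty$. The plan is a telescoping argument: we join $f_1$ to $f_2$ by a chain of $n+1$ functions in $C_p^*(X)$ such that consecutive functions differ by less than $1$ at every point of $K$. Then each step changes the value at $y$ by at most $a$, and the triangle inequality gives at most $n\cdot a$.

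Concretely, let $s=f_2-f_1\in C_p^*(X)$. By hypothesis $|s(x)|<n$ for all $x\in K$. Since $K$ is finite, there is $c<n$ with $|s(x)|\le c$ on $K$. Put $r(t)=\max(-c,\min(c,t))$, so $r\circ s\in C_p^*(X)$ agrees with $s$ on $K$ and has sup norm at most $c$. (Alternatively we could extend $s\upharpoonright K$ through $\beta X$, as in the proof of Proposition \ref{Y_m+1 is nonempty}; truncation is simpler.) Define
$$h_i=f_1+\tfrac{i}{n}\,(r\circ s),\qquad i=0,1,\dots,n.$$
Each $h_i$ is bounded and continuous, hence lies in $C_p^*(X)$. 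We have $h_0=f_1$, and $h_n$ agrees with $f_2$ on $K$. Also, for all $x\in X$,
$$|h_{i+1}(x)-h_i(x)|\le c/n<1.$$
So by the definition of $a_\varphi(y,K)$, $|\varphi(h_{i+1})(y)-\varphi(h_i)(y)|\le a$ for each $i<n$. Summing over $i$ gives $|\varphi(h_n)(y)-\varphi(f_1)(y)|\le n a$.

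The remaining point, which I expect to be the main obstacle, is to pass from $h_n$ to $f_2$. These two functions coincide on $K$, but they need not coincide elsewhere. Here we use the defining property of $a_\varphi(y,K)$ once more, in scaled form. For every $\varepsilon>0$, the functions $h_n$ and $f_2$ satisfy $|h_n(x)-f_2(x)|=0<1$ on $K$, so they differ at $y$ by at most $a$. This alone is too weak, since it adds an extra $a$. The better approach is to build the chain so that the last step already ends at $f_2$. To do this, replace the endpoint: set $h_n=f_2$ directly, and use the steps $h_{n-1}\to f_2$. On $K$ their difference is $s-\tfrac{n-1}{n}s=s/n$, which has absolute value less than $1$. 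Since the definition of $a_\varphi(y,K)$ only requires $|f(x)-g(x)|<1$ for $x\in K$, and not on all of $X$, this last step is also bounded by $a$. In fact every step only needs the difference to be small on $K$, so the truncation is unnecessary. Simply take $h_i=f_1+\tfrac{i}{n}s$. Each consecutive difference equals $s/n$ and satisfies $|s/n|<1$ on $K$. The telescoping sum then yields $|\varphi(f_1)(y)-\varphi(f_2)(y)|\le n\cdot a_\varphi(y,K(y))$ directly, using only that $C_p^*(X)$ is closed under affine combinations.
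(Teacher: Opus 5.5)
Your final argument is correct and matches the proof the paper relies on. The paper defers to Lemma 3.1 of G\'orak--Krupski--Marciszewski, whose proof is exactly your chain $h_i=f_1+\tfrac{i}{n}(f_2-f_1)$, $i=0,\dots,n$, with consecutive differences of modulus $<1$ on $K(y)$, followed by the triangle inequality. You can cut the truncation detour and the worry about getting from $h_n$ to $f_2$, since the defining condition for $a_\varphi(y,K(y))$ only involves points of $K(y)$; you are also implicitly, and correctly, taking $n$ to be a positive integer.
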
 

Let us recall that a subset $A$ of a space $X$ is \textit{$C$-embedded} ($C^*$-embedded) if for every $f\in C_p(A)$ (for every $f\in C_p^*(A)$) there exists $\hat{f}\in C_p(X)$ ($\hat{f}\in C_p^*(X)$) such that $\hat{f}\upharpoonright A = f$.

\begin{pro}
If $\varphi: C_p^*(X)\to D_p(Y)$ is a bounded uniform homeomorphism such that $X$ is a metric compactum,
then $Y$ pseudocompact.
\end{pro}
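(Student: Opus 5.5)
The plan is to argue by contradiction, combining the boundedness of $\varphi$ with the Lemma above to get a uniform linear estimate, and then contradicting that estimate with a discrete family of bumps in $Y$. Suppose $Y$ is not pseudocompact. Then there is a discrete (in $Y$) sequence of nonempty open sets $U_k$ with points $y_k\in U_k$, and for every $k$ a function $g_k\in C_p^*(Y)$ with $g_k(y_k)=k$ and $g_k(Y\setminus U_k)=\{0\}$. Since every $y\in Y$ has a neighbourhood meeting at most one $U_k$, we get $g_k\to 0$ in $D_p(Y)$, whether $D_p(Y)$ is $C_p(Y)$ or $C_p^*(Y)$. Moreover, if $D_p(Y)=C_p(Y)$, the sum $g=\sum_k g_k$ is a single continuous unbounded function.

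First I derive the uniform estimate. Because $\varphi$ is bounded, there is $m$ with $Y\subset Y_m$. For each $y$ we have $K(y)\subset K_m(y)$, hence $a_\varphi(y,K(y))\le a_\varphi(y,K_m(y))\le m$. Put $h=\varphi^{-1}(0)$. Since $X$ is compact, every element of $C_p^*(X)=C_p(X)$ is bounded. So for each $f$ there is an integer $n_f>\|f-h\|$, and the Lemma gives
$$|\varphi(f)(y)-\varphi(h)(y)|=|\varphi(f)(y)|\le n_f\, m\quad\text{for all } y\in Y .$$
Thus $\varphi$ carries each $f$ to a function bounded on $Y$ by $m(\|f-h\|+1)$. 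In the case $D_p(Y)=C_p(Y)$ this already finishes the proof: applied to $f=\varphi^{-1}(g)$, it contradicts the unboundedness of $g$ at the points $y_k$.

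The main obstacle is the case $D_p(Y)=C_p^*(Y)$, where no single unbounded $g$ is available. Here I would put $f_k=\varphi^{-1}(g_k)$. The estimate gives $\|f_k-h\|\ge k/m-1\to\infty$, while continuity of $\varphi^{-1}$ and $g_k\to 0$ give $f_k\to h$ pointwise on $X$. I would then pick $x_k\in X$ with $|f_k(x_k)-h(x_k)|\to\infty$ and, using that $X$ is compact metric, pass to a subsequence with $x_k\to x_0$. Next I would use uniform continuity of $\varphi^{-1}$: there is a finite set $L\subset Y$ and $\varepsilon>0$ such that functions $\varepsilon$-close on $L$ have $\varphi^{-1}$-images differing by less than $1$ at $x_0$. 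Since only finitely many $U_k$ meet $L$, I would form a bounded combination $G=\sum_j \chi_j$, where the $\chi_j$ are bumps of fixed height $1$ supported in $U_{k_j}$ and chosen along a sparse subsequence. The aim is to show that the values $\varphi^{-1}(G)$ near $x_0$ (at the points $x_{k_j}$) must be large, contradicting continuity of $\varphi^{-1}(G)$ at $x_0$. I expect the hard part to be the transfer from heights $k$ to bounded heights. For this I would use the Lemma for $\varphi^{-1}$ restricted to the separable closed pieces $D_p^q(f)$ from Facts 1--3, together with a Baire category argument on the compact metric space $X$.
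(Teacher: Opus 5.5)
\textbf{The case $D_p(Y)=C_p^*(Y)$ has a genuine gap. The case $D_p(Y)=C_p(Y)$ is essentially correct but contains one slip.}

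\emph{The slip.} Enlarging $K$ shrinks the set of admissible pairs $(f,g)$. So $K(y)\subset K_m(y)$ gives $a_\varphi(y,K(y))\ge a_\varphi(y,K_m(y))$, not $\le$, and you cannot bound $a_\varphi(y,K(y))$ by $m$. The fix is to run the interpolation argument behind the Lemma directly with $K_m(y)$. Take $n>\|f-h\|$ and $f_i=h+\frac{i}{n}(f-h)\in C_p^*(X)$. Consecutive $f_i$ differ by less than $1$ everywhere, hence on $K_m(y)$, so $|\varphi(f)(y)|\le nm$. With this fix the $C_p(Y)$ case goes through, and it needs no hypothesis on $X$ at all.

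\emph{The gap.} In the case $D_p(Y)=C_p^*(Y)$ your proposal is a hope rather than an argument. Nothing in it forces $\varphi^{-1}(G)$ to be large near $x_0$ for a bump sum $G$ of bounded height. Boundedness of $\varphi$ yields only a coarse statement: if $\|\varphi^{-1}(g)-\varphi^{-1}(g')\|_\infty<1$, then $|g-g'|\le m$ on $Y$. This says nothing about the size of $\varphi^{-1}$ of a single bounded function. Moreover, $\varphi^{-1}$ is not assumed bounded, so applying the Lemma to $\varphi^{-1}$ gives no uniform constants. The proposed Baire-category and gliding-hump route therefore has no visible mechanism.

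\emph{The missing idea.} Replace \emph{unbounded heights along a sequence} by \emph{a fixed height $m+1$ along uncountably many subsets}. Then use the compact metric hypothesis through separability of $(C(X),\|\cdot\|_\infty)$, not through sequential compactness. Concretely:
\begin{itemize}
\item Take a countable discrete $C^*$-embedded set $T=\{y_n\}\subset Y$.
\item For each $A\subset T$, let $g_A\in C_p^*(Y)$ equal $m+1$ on $A$ and $0$ on $T\setminus A$.
\item The functions $f_A=\varphi^{-1}(g_A)$ are continuum many distinct points of the separable metric space $(C(X),\|\cdot\|_\infty)$. Hence some $A\neq B$ satisfy $\|f_A-f_B\|_\infty<1$.
\item Then $|f_A-f_B|<1$ on each $K_m(y_n)$, so $|g_A(y_n)-g_B(y_n)|\le m$ for every $n$.
\item This contradicts $|g_A-g_B|=m+1$ on $A\triangle B\neq\varnothing$.
\end{itemize}
This argument works verbatim for $D_p(Y)=C_p(Y)$ as well, so no case split is needed.
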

\begin{proof}
Fix a positive integer $m$ such that $Y\subseteq Y_m$ (it exists because $\varphi$ is bounded).
By homogeneity of $D_p(Y)$, we can assume that $\varphi$ maps the zero-function on $X$ onto the zero-function on $Y$. Striving for a contradiction, suppose that $Y$ is not pseudocompact. Then $Y$ contains a countable discrete $C$-embedded, thus $C^*$-embedded, subset $T=\{y_n:n=1,2,\ldots \}$ (see \cite[Theorem 1.1.3]{pseudocompact_book}). Let $\exp(T)$ be the family of all subsets of $T$.
For $A\in \exp(T)$, define a function $h_A:T\to \mathbb{R}$ as follows:

\begin{align*}
h_A(y)=
\begin{cases}
m+1&\quad \text{for}\;y\in A\\
0&\quad \text{for}\;y\in T\setminus A
\end{cases}
\end{align*}
Since $T$ is $C^*$-embedded, for every $A\in \exp(T)$, the function $h_A$ extends to a function $g_A\in C_p^*(Y)$. For each $A\in \exp(T)$, let
$$f_A=\varphi^{-1}(g_A).$$
The space $X$ is compact metric, so the space $C^*(X)$ of continuous real-valued functions on $X$ endowed with the supremum metric, is separable (see \cite[Proposition 7.6.2]{se}). Hence, since the family $\{f_A:A\in \exp(T)\}$ is uncountable, there exist $A,B\subset T$ such that $A\neq B$ and $|f_A(x)-f_B(x)|<1$ for all $x\in X$. This implies that $|g_A(y_n)-g_B(y_n)|\leq a_\varphi(y_n,K(y_n)\leq m$ for all $n$. On the other hand, if $y\in A\bigtriangleup B$, then $|g_A(y)-g_B(y)|=m+1$ (here $A\bigtriangleup B$ is the symmetric difference of $A$ and $B$, which is nonempty because $A\neq B$). This contradiction concludes the proof.
\end{proof}

{\em Proof of Theorem $1.1$.} Suppose $\varphi:C_p^*(X)\to D_p(Y)$ is a uniform homeomorphism. We follow the proof of \cite[Theorem 4.3]{valvu}. We assume that $D_p(Y)=C^*(Y)$, the other case is similar.
Suppose $X$ is pseudocompact. To show that $Y$ is also pseudocompact it suffices to prove that $g(Y)$ is compact for every $g\in\mathcal F_Y$. So, let us fix $g_1\in\mathcal F_Y$. Since, by Fact 3, $C_p^q(g_1)\subset C_p^*(Y)$ is closed and separable, Facts 1 and 2 together with \cite[S.163 (iii)]{tk} imply the existence of $f_1\in\mathcal F_X$ with
$\varphi^{-1}(C_p^q(g_1))\subset C_p^q(f_1)$. Again using Facts 1, 2 and 3 we find $g_2\in\mathcal F_Y$ such that $\varphi(C_p^q(f_1))\subset C_p^q(g_2)$. We can assume $g_2\succ g_1$ (otherwise instead of $g_2$ we take the diagonal product of $g_1$ and $g_2$). Proceeding in this way, we construct two sequences $\{f_n\}\subset\mathcal F_X$ and $\{g_n\}\subset\mathcal F_Y$ such that:
\begin{itemize}
\item[(1)] $f_{n+1}\succ f_n$ and $g_{n+1}\succ g_n$;
\item[(2)] $\varphi(C_p^q(f_n))\subset C_p^q(g_{n+1})$ and $\varphi^{-1}(C_p^q(g_n))\subset C_p^q(f_{n})$.
\end{itemize}
Let $f$ (resp., $g$) be the diagonal product of all $f_n$ (resp., all $g_n$). Then the union $\bigcup_{n\geq 1}h^\sharp(f,f_n)(C_p^*(f_n(X)))$ is dense in $C_p(X_f)$, where $h^\sharp(f,f_n):C_p^*(f_n(X))\to C_p^*(X_f)$ is the dual of the map $h(f,f_n):X_f\to f_n(X)$. Because $C_p(f)$ is dense in $C_p^q(f)$ (see \cite[S.163 (iv)]{tk}), we have that $\bigcup_{n\geq 1}C_p^q(f_n)$ is dense in $C_p^q(f)$. Similarly, $\bigcup_{n\geq 1}C_p^q(g_n)$ is dense in $C_p^q(g)$. The last two inclusions imply that $\varphi(C_p^q(f))=C_p^q(g)$ (recall that, by Fact 3, $C_p^q(f)$ is closed in $C_p^*(X)$ and $C_p^q(g)$ is closed in  $C_p^*(Y)$). Since both $X_f^q$ and $Y_g^q$ carry the $R$-quotient topology, $C_p^q(f)$ is linearly homeomorphic to $C_p^*(X_f^q)$ and $C_p^q(g)$ is linearly homeomorphic to $C_p^*(Y_g^q)$. Therefore, there is a uniform homeomorphism $\varphi_1:C_p^*(X_f^q)\to C_p^*(Y_g^q)$. Because $X$ is pseudocompact, $X_f$ is homeomorphic to $X^q_f$, see \cite{ar}. So, $X^q_f$ is a metric compactum.
\begin{claim}
The uniform homeomorphism $\varphi_1$ is bounded.
\end{claim}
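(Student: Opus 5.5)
We show that the constant $m$ witnessing the boundedness of $\varphi$ (so that $Y\subset Y_m$) also works for $\varphi_1$, possibly after replacing it by $m+1$. The idea is to transfer supports from $\varphi$ to $\varphi_1$ through the linear homeomorphisms $q^\sharp(f):C_p^*(X_f^q)\to C_p^q(f)$ and $q^\sharp(g):C_p^*(Y_g^q)\to C_p^q(g)$. By construction,
$$\varphi_1=(q^\sharp(g))^{-1}\circ\varphi\circ q^\sharp(f),$$
so $\varphi_1(u)\circ q(g)=\varphi(u\circ q(f))$ for every $u\in C_p^*(X_f^q)$.

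Fix a point $z\in Y_g^q$ and choose $y\in Y$ with $q(g)(y)=z$, which is possible since $q(g)$ is onto. By boundedness of $\varphi$ there is a finite $K=K_m(y)\subset X$ with $a_\varphi(y,K)\le m$. Put $L=q(f)(K)$, a finite subset of $X_f^q$. Take $u,v\in C_p^*(X_f^q)$ with $|u(t)-v(t)|<1$ for all $t\in L$. Then $u\circ q(f)$ and $v\circ q(f)$ lie in $C_p^*(X)$ and satisfy $|u\circ q(f)(x)-v\circ q(f)(x)|<1$ for every $x\in K$. Hence
$$|\varphi_1(u)(z)-\varphi_1(v)(z)|=|\varphi(u\circ q(f))(y)-\varphi(v\circ q(f))(y)|\le m .$$
Taking the supremum over all such $u,v$ gives $a_{\varphi_1}(z,L)\le m$. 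Since $L$ is finite, $L\in\mathcal A_m(z)$, so $z\in Y_m$ for $\varphi_1$. As $z$ was arbitrary, $Y_g^q$ is contained in the corresponding $Y_m$ and $\varphi_1$ is bounded.

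The step that needs care is the sup in the definition of $a_{\varphi_1}(z,L)$. It ranges over all of $C_p^*(X_f^q)$, whereas $a_\varphi(y,K)$ ranges over all of $C_p^*(X)$. The comparison goes in the right direction, however: every admissible pair for $\varphi_1$ gives, after composing with $q(f)$, an admissible pair for $\varphi$, and the functions $u\circ q(f)$ are genuinely elements of $C_p^*(X)$. So no extension argument is needed. One should also double-check that the identification $C_p^q(g)\cong C_p^*(Y_g^q)$ is evaluation-compatible, i.e.\ that $(q^\sharp(g))^{-1}(w)(z)=w(y)$ for any $y\in q(g)^{-1}(z)$. This holds because $w=\tilde w\circ q(g)$ is constant on fibers of $q(g)$.

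If the definition of $\varphi_1$ in the paper incorporates an additional translation, for instance to keep zero fixed, the bound is unchanged, since $a$ is translation invariant. If an infinite or non-finite witness set arises at some point, Proposition \ref{Y_m+1 is nonempty} upgrades it to a finite one at the cost of replacing $m$ by $m+1$.
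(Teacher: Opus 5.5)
Your proposal is correct and is essentially the paper's proof. You pick a preimage $y$ of $z$ under $q(g)$, push the finite witness $K_m(y)$ forward to $q(f)(K_m(y))$, and use the identity $\varphi(h\circ q(f))=\varphi_1(h)\circ q(g)$ to obtain the same constant $m$ for $\varphi_1$. Your closing remarks about translations and passing to $m+1$ are unnecessary, because $K_m(y)$ is finite by definition of $\mathcal A_m(y)$.
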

Since $\varphi$ is bounded, there is an integer $m$ such that $Y\subset Y_m$. So, for every $y\in Y$ there is a set $K_m(y)\subset X$ with $K_m(y)\in\mathcal A_m(y)$. Now, for every $z\in Y_g^q$ fix $y_z\in Y$ with $q(g)(y_z)=z$. We are going to show that $q(f)(K_m(y_z))\in\mathcal A_m(z)$. To this end, let $h_1,h_2\in C^*(X^q_f)$ such that $|h_1(u)-h_2(u)|<1$ for all $u\in q(f)(K_m(y_z))$. Let $\theta_i=h_i\circ q(f)$, for $i=1,2$. Then $|\theta_1(x)-\theta_2(x)|<1$ for all $x\in K_m(y_z)$, so
$|\varphi(\theta_1)(y_z)-\varphi(\theta_2)(y_z)|\leq m$. Since $\varphi(\theta_i)=\varphi_1(h_i)\circ q(g)$, we have
$\varphi(\theta_i)(y_z)=\varphi_1(h_i)(z)$ for $i=1,2$. So $|\varphi_1(h_1)(z)-\varphi_1(h_2)(z)|\leq m$, whence $q(f)(K_m(y_z))\in\mathcal A_m(z)$ for all $z\in Y^q_g$. Claim 2 is proved.

By Proposition 3.2, $Y^q_f$ is pseudocompact, and hence homeomorphic to $Y_g$ (see \cite{ar}). Finally, since $g\succ g_1$, the set $g_1(Y)$ is also compact. This completes the proof of Proposition 3.3. $\Box$

\textbf{Acknowledgments.} The authors thank  Prof. A. Leiderman for his useful discussions.

\end{document}